%%%%%%%%%%%%%%%%%%%%%%%%%%%%%%%%%%%%%%%%%%%%%%%%%%%%%%%%%%%%%%%%%%%%%%%%%%%%%%%%
%2345678901234567890123456789012345678901234567890123456789012345678901234567890
%        1         2         3         4         5         6         7         8

\documentclass[letterpaper, 10 pt, conference]{ieeeconf}  % Comment this line out if you need a4paper

\IEEEoverridecommandlockouts                              % This command is only needed if 
                                                          % you want to use the \thanks comman
\overrideIEEEmargins                                      % Needed to meet printer requirements.

%In case you encounter the following error:
%Error 1010 The PDF file may be corrupt (unable to open PDF file) OR
%Error 1000 An error occurred while parsing a contents stream. Unable to analyze the PDF file.
%This is a known problem with pdfLaTeX conversion filter. The file cannot be opened with acrobat reader
%Please use one of the alternatives below to circumvent this error by uncommenting one or the other
%\pdfobjcompresslevel=0
%\pdfminorversion=4

% See the \addtolength command later in the file to balance the column lengths
% on the last page of the document

% The following packages can be found on http:\\www.ctan.org
%\usepackage{graphics} % for pdf, bitmapped graphics files
%\usepackage{epsfig} % for postscript graphics files
%\usepackage{mathptmx} % assumes new font selection scheme installed
%\usepackage{times} % assumes new font selection scheme installed
%\usepackage{amsmath} % assumes amsmath package installed
  % assumes amsmath package installed

%%%%%%%%%%%%%%%%%%%%%%%%%%%%%%%%%%%%%%%%%%%%%%%%%%%%%%%
%% PACKAGES
%\usepackage{mathptmx}
\usepackage{amssymb}
%For the categories
\usepackage{tikz-cd}
\usetikzlibrary{arrows}
\usepackage{mathrsfs}

% nicer ttt

% magic trick 
 
\usepackage{amsthm}
\usepackage{amsmath}
\usepackage{graphicx}
% EULER mathcal
% \usepackage{eucal}
\usepackage{subcaption} 
\usepackage{enumerate}

\usepackage{centernot}

\newtheoremstyle{mystyle}%                % Name
  {}%                                     % Space above
  {}%                                     % Space below
  {\itshape}%                             % Body font
  {}%                                     % Indent amount
  {\bfseries}%                            % Theorem head font
  {.}%                                    % Punctuation after theorem head
  { }%                                    % Space after theorem head, ' ', or \newline
  {}%                                     % Theorem head spec (can be left empty, meaning `normal')

\theoremstyle{mystyle}

\newtheorem{theorem}{Theorem}[section]

\newtheorem{assumption}[theorem]{Assumption} 

\newtheoremstyle{mystyleUpshape}%                % Name
  {}%                                     % Space above
  {}%                                     % Space below
  {}%                             % Body font
  {}%                                     % Indent amount
  {\bfseries}%                            % Theorem head font
  {.}%                                    % Punctuation after theorem head
  { }%                                    % Space after theorem head, ' ', or \newline
  {}%                                     % Theorem head spec (can be left empty, meaning `normal')

\theoremstyle{mystyleUpshape}
\newtheorem{theoremUpshape}{Theorem}[section]

\newtheorem{remark}[theoremUpshape]{Remark}

% Bib management %
%\usepackage{biblatex}
\usepackage[backend=bibtex,
%style=numeric,
style=ieee,
bibencoding=ascii,
%style=reading,
doi=false
]{biblatex}
\addbibresource{bibl}

\makeatletter
\newbox\dottedarrow@box
\setbox\dottedarrow@box\hbox
  {%
    \begin{tikzpicture}
      \draw[dotted,->] (0,0) -- (1.5em,0);
    \end{tikzpicture}%
  }
\newcommand*\dottedarrow
  {\relax\ifmmode\expandafter\dottedarrow@m\else\expandafter\dottedarrow@t\fi}
\newcommand*\dottedarrow@t[1][1.5em]
  {\resizebox{#1}{!}{\raisebox{.5ex}{\usebox\dottedarrow@box}}}
\newcommand*\dottedarrow@m[1][]
  {%
    \if\relax\detokenize{#1}\relax
      \mathchoice% values are trial and error based\ldots
        {\dottedarrow@t}
        {\dottedarrow@t}
        {\dottedarrow@t[1.1em]}
        {\dottedarrow@t[0.9em]}%
    \else
      \dottedarrow@t[#1]%
    \fi
  }
\makeatother
%%%%%%%%%%%%%%%%%%%%%%%%%%%%%%%%%%%%%%%%%%%%%%%%%%%%%%%%%%%%%

\title{\LARGE \bf
A generalized global Hartman-Grobman theorem for\\ asymptotically stable semiflows
}

\author{Wouter Jongeneel% <-this % stops a space
\thanks{The author is with the KTH Royal Institute of Technology and Digital Futures (SE). The majority of this work was carried out when the author was a postdoctoral fellow at the department of Applied Mathematics (INMA, ICTEAM) at UCLouvain (BE), supported through the European Research Council (ERC) (grant agreement No. 864017 - L2C). Contact: \texttt{wouterjo@kth.se}, \texttt{www.wjongeneel.nl}.} 
}

\begin{document}

\maketitle
\thispagestyle{empty}
\pagestyle{empty}

%%%%%%%%%%%%%%%%%%%%%%%%%%%%%%%%%%%%%%%%%%%%%%%%%%%%%%%%%%%%%%%%%%%%%%%%%%%%%%%%
\begin{abstract}
Recently, Kvalheim and Sontag provided a generalized global Hartman-Grobman theorem for equilibria under asymptotically stable continuous vector fields. By leveraging topological properties of Lyapunov functions, their theorem works without assuming hyperbolicity.
We extend their theorem to a class of possibly discontinuous vector fields, in particular, to vector fields generating asymptotically stable semiflows. 
\end{abstract}

\begin{keywords}
asymptotic stability, Hartman-Grobman linearization, Koopman theory, semiflow
\end{keywords}

\section{Introduction}
Linearization, in all its forms, remains one of the most powerful techniques to handle nonlinear sytems, \textit{e.g.}, there is machinery that is widely, rigorously and easily applicable. Unfortunately, one can typically only say something locally. Therefore, within the broader field of linearization techniques, Koopman operator theory is of great interest as, amongst other things, the analysis need not be local~\cite{ref:mezic2021koopman}.
However, in a recent survey on modern Koopman theory we find the following: ``\textit{..., obtaining finite-dimensional coordinate systems and embeddings in which the
dynamics appear approximately linear remains a central open challenge.}''~\cite[p. 1]{ref:brunton2022modernKoopman} and
``\textit{..., there is little hope for
global coordinate maps of this sort.}''~\cite[p. 4]{ref:brunton2022modernKoopman}.

Clearly, these comments relate to the desire of obtaining tools akin to the celebrated Hartman-Grobman theorem (\textit{e.g.}, see~\cite{ref:hartman1960lemma}); ideally, tools that are applicable globally and without relying on hyperbolicity. Indeed, a recent global extension of the Hartman-Grobman theorem by Kvalheim and Sontag exploits stability, and in particular Lyapunov theory, to overcome the restrictive reliance on hyperbolicity \textit{cf}.~\cite{ref:lan2013linearization,ref:eldering2018global,ref:kvalheim2021existence,ref:kvalheim2025global}. Here, the intuition is that the Lyapunov function replaces the Morse function. We remark that in the context of topological embeddings (\textit{i.e.}, not necessarily mapping onto a linear space), hyperbolicity has been relaxed before, also by exploiting stability, \textit{e.g.}, see \cite[Cor. 4]{ref:kvalheim2023linearizability}. 

Now, elaborating on \cite{ref:kvalheim2025global}, we can construct a similar result when the vector field is not necessarily continuous at the equilibrium point, see Theorem~\ref{thm:discont}. As in~\cite{ref:kvalheim2025global}, we build upon the topological results in \cite{ref:wilson1967structure} and \cite{ref:grune1999asymptotic}. Although motivated by the question of linearization, results like these sharpens in particular our understanding of topological equivalence. Differently put, these results further sharpen our understanding how to distinguish dynamical systems. To further motivate this particular setting, we look at dynamical systems that might fail to be continuous at the equilibrium for several reasons: this setting captures some closed-loop systems where discontinuous feedback was applied (which for several problems is necessary); this setting captures a class of systems that render the equilibrium finite-time stable; and this setting relates to studying robustness, \textit{e.g.}, through inclusions. We point the reader to, for instance, \cite{ref:cortes2008discontinuous} for more information. We already mention that we abstract this setting by simply saying: we study systems that generate asymptotically stable \textit{semi}flows, we return to this below.        

To illustrate our setting, the running example we have in mind is a normalized vector field on $\mathbb{R}^n$ of the form
\begin{equation}
\label{equ:normalized:flow}
    \dot{x} = \begin{cases} \displaystyle-\frac{x}{\|x\|_2} \quad &x\in \mathbb{R}^n\setminus\{0\}\\
    0 \quad &\text{otherwise},
    \end{cases}
\end{equation}
where the solutions will be understood \textit{in the sense of Filippov}\footnote{See~\cite{ref:hajek1979discontinuous} for a comparison of solution frameworks.}, with $\mathcal{F}$ denoting the Filippov operator \cite[p. 85]{ref:filippov1988differential}. For instance, the solution corresponding to~\eqref{equ:normalized:flow} becomes
\begin{equation}
\label{equ:flow:sol}
 (t,x)\mapsto\varphi_1^t(x) := \begin{cases}
 \left(1-\displaystyle\frac{t}{\|x\|_2}\right)x\quad &t\leq \|x\|_2 \\
     0 \quad &\text{otherwise},
 \end{cases}   
\end{equation}
which is not a flow, but a \textit{semi}flow. The reason being, $(t,x)\mapsto \varphi_1^t(x)$ need not be well-defined for $t<0$, \textit{e.g.}, consider $\varphi_1^{t}(0)$ for any $t<0$. Hence, we should restrict the domain of $\varphi_1$. More formally, given a topological space $M$, a continuous map $
\varphi:M\times \mathbb{R}_{\geq 0}\to M$, usually written as $(x,t)\mapsto \varphi^t(x)$, is said to be a \textit{semiflow} when $\varphi^0=\mathrm{id}_M$ and $\varphi^{s}\circ \varphi^t=\varphi^{s+t}$ for all $s,t\in \mathbb{R}_{\geq 0}$, that is, the identity- and the semi-group axiom are satisfied, as is the case for (semi)flows generated by smooth ODEs. For a thorough stability analysis of semiflows, we point the reader to \cite{ref:bhatiahajek2006local}.  

Indeed, as we will study semiflows that are possibly generated by vector fields, we consider sufficiently regular manifolds and not arbitrary topological spaces. Then, in general, given a semiflow $\varphi$ on some topological manifold $M$, we speak of a \textit{linearizing} (with respect to $\varphi$) \textit{homeomorphism} $h:M\to \mathbb{R}^n$ when $h\circ \varphi^t = e^{t\cdot A}\circ h$ for all $t\in \mathbb{R}_{\geq 0}$ and some matrix $A\in \mathbb{R}^{n\times n}$ (\textit{i.e.}. $\varphi^t$ and $e^{t\cdot A}$ are topologically conjugate). Observe that the Hartman-Grobman theorem provides us with a particularly convenient, but \textit{local}, linearization (\textit{e.g.}, $A$ is of the form $\mathrm{D}X(x_*)$ for some $C^1$ vector field $X$ and hyperbolic equilibrium point $x_*$). We emphasize that this type of linearization is a deliberate and convenient choice, but we could have focused on another canonical system or even another type of transformation. 

Before we continue, let us emphasize why these linearization questions are non-trivial. To that end, consider two scalar, asymptotically stable, linear ODEs $\dot{x}=-ax$ and $\dot{y}=-b y$, with $a,b>0$. Suppose we set $y=h(x):=\kappa x$, for some $\kappa\in \mathbb{R}\setminus \{0\}$. It follows that $\kappa(e^{-at}(\kappa^{-1}y))=e^{-bt}y$ must hold for all $t\geq 0$ and $y\in \mathbb{R}$, hence, $a=b$ must hold. Instead of a linear homeomorphism, one readily finds that, for instance 
\begin{equation*}
    x\mapsto h(x) :=
    \begin{cases}
        \mathrm{sgn}(x) |x|^{\frac{b}{a}} \quad &x\in \mathbb{R}\setminus \{0\} \\
        0 \quad &\text{otherwise}
    \end{cases}
\end{equation*}
does check out, \textit{i.e.}, $h(e^{-at}x)=e^{-bt}h(x)$. 

Now, when it comes to linearizing a system like~\eqref{equ:normalized:flow}, the first obstruction that comes to mind might be a lack of continuity at the equilibrium point, however, a more fundamental issue is that due to the finite-time stability property of such a semiflow, there cannot be a homeomorphism $h$ such that the conjugacy $h\circ \varphi_1^t\circ h^{-1}=e^{-t\cdot I_n}$ holds true for all $(t,h^{-1}(y))\in \mathrm{dom}(\varphi_1)$, for otherwise $\dot{y}=-y$ would correspond to a \textit{finite}-time stable system. With this observation in mind, in this short note we show to what extent semiflows corresponding to vector fields like~\eqref{equ:normalized:flow} can still be ``\textit{linearized}''. We emphasize that we will not consider a reparametrization of time.

\textit{Notation}. We let $B(0,r):=\{x\in \mathbb{R}^n:\|x\|_2<r\}$ be the open ball of radius $r$, centered at $0\in \mathbb{R}^n$, with $B(0,r)^c:=\mathbb{R}^n\setminus\{B(0,r)\}$ denoting its complement. With $\mathrm{cl}(W)$ and $\partial W$ we denote the topological closure and manifold boundary of $W$, respectively. The symbol $\simeq_t$ denotes topological equivalence, whereas $\simeq_h$ denotes homotopy equivalence. A continuous function $\gamma:\mathbb{R}_{\geq 0}\to \mathbb{R}_{\geq 0}$ is of class $\mathcal{K}_{\infty}$ when $\gamma(0)=0$, $\gamma$ is strictly increasing and $\lim_{s\to +\infty}\gamma(s)=+\infty$.

\section{Main result}
On $\mathbb{R}^n$, the vector fields we consider are possibly set-valued at $0$, locally essentially bounded on $\mathbb{R}^n$ and locally Lipschitz on $\mathbb{R}^n \setminus \{0\}$, \textit{e.g.}, like~\eqref{equ:normalized:flow}. It is known that under these assumptions, applying the Filippov operator, denoted $\mathcal{F}[\cdot]$, to such a vector field yields a map that is upper semi-continuous and compact, convex valued, allowing for a \textit{smooth} converse Lyapunov theory, \textit{e.g.}, see \cite{ref:clarke1998asymptotic}. Indeed, one can do with less regular vector fields. However, we focus on examples akin to~\eqref{equ:normalized:flow} and keep the presentation simple. 

For the appropriate generalization to manifolds, any manifold $M$ we consider is smooth, second countable and Hausdorff (to appeal to Whitney's embedding theory, \textit{e.g.}, see \cite[Cor. 6.16]{ref:Lee2}). Then, a set-valued vector field $F:M \rightrightarrows TM$ (\textit{e.g.}, think of $\mathcal{F}[X]$) is said to satisfy the \textit{basic conditions} when it is a locally bounded map, outer-semicontinuous and $F(x)$ is non-empty, compact and convex for all $x\in M$ \cite[Def. 5]{ref:mayhew2011topological}. These basic conditions suffice for a smooth converse Lyapunov theory \cite[Cor. 13]{ref:mayhew2011topological} via \cite{ref:CaiTeelGoebelPII2008}. In particular, we consider the following class of vector fields.

\begin{assumption}[Vector field regularity on $M$, conditioned on a point $x'\in M$]
\label{ass:reg:X:M}
Given a point $x'\in M$, our vector fields are locally essentially bounded on $M$, possibly set-valued at $x'$ and locally Lipschitz on $M\setminus(\partial M \cup\{x'\})$. 
\end{assumption}

Suppose that $X$ complies with Assumption~\ref{ass:reg:X:M}, see that after smoothly embedding $M$ into some $\mathbb{R}^k$, the vector field in new coordinates still satisfies the conditions of Assumption~\ref{ass:reg:X:M}. Then, it is known that under Assumption~\ref{ass:reg:X:M}, the differential inclusion that corresponds to the Filippov operator (\textit{i.e.}, applied after embedding $M$), satisfies the basic conditions from above. For simplicity of exposition, however, we will now directly work with embedded submanifolds $M\subseteq \mathbb{R}^k$ (generalizations are of course immediate).

Also, see that Assumption~\ref{ass:reg:X:M} only allows for mildly discontinuous vector fields $X$ akin to~\eqref{equ:normalized:flow}, \textit{i.e.}, the local boundedness is only relevant for a neighbourhood of $x'\in M$.  

Let $X$ be a vector field on an embedded submanifold $M\subseteq \mathbb{R}^k$ that complies with Assumption~\ref{ass:reg:X:M}, then, from now on, the corresponding (Filippov) \textit{solutions} $(t,p)\to \varphi^t(p)$ to the differential equation $\dot{x}=X(x)$, are understood to be absolutely continuous in $t$, on compact intervals $\mathcal{I}$, and such that they satisfy
\begin{equation*}
    \left.\frac{\mathrm{d}}{\mathrm{d}s}\varphi^s(p)\right|_{s=t}\in\mathcal{F}[X](\varphi^t(p)) \text{ for} \text{ a.e. } t\in \mathcal{I}.
\end{equation*}

We study (strong) (global) asymptotic stability of equilibria under these solutions and point to \cite[Def. 2.1]{ref:clarke1998asymptotic} and \cite[Sec. 2]{ref:mayhew2011topological} for further details. Note in particular, that under Assumption~\ref{ass:reg:X:M}, we will have uniqueness of solutions if we set $x'$ to be the attractor. This, because of stability. 

Our main result consists of two cases, with case \textit{(I)} being reminiscent of~\eqref{equ:normalized:flow} on all of $\mathbb{R}^n$ whereas case \textit{(II)} captures, for instance, \eqref{equ:normalized:flow} defined on a \textit{bounded} domain (\textit{i.e.}, this is why we consider the possibility of $\partial M \neq \varnothing$ in Assumption~\ref{ass:reg:X:M}).   

\begin{figure}
    \centering
    \includegraphics[scale=0.9]{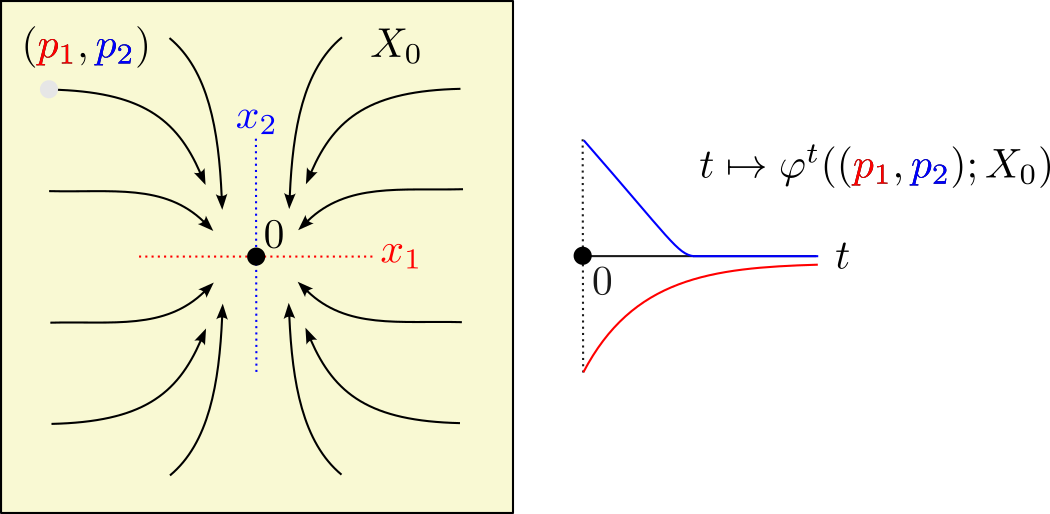}
    \caption{Some orbits corresponding to the vector field~\eqref{equ:X0}, plus a typical element-wise solution starting from a point $(p_1,p_2)\in \mathbb{R}^2$. Observe the difference in convergence (finite-time vs. asymptotic).}
    \label{fig:X0}
\end{figure}

Before stating the result, let us clarify the inherent difficulties with semiflows. Let $B(x_*)$ be the basin of attraction of $x_*\in M$ under some flow $\varphi$. In \cite{ref:kvalheim2025global}, the authors construct their linearizing homeomorphism by exploiting that $B(x_*)\setminus \{x_*\}\simeq_t \mathbb{R}\times \mathbb{S}^{n-1}$, that is, they exploit that $\mathbb{R}\times B(x_*)\subseteq\mathrm{dom}(\varphi)$. In the case of semiflows, one might define the map $T^+_*:B(x_*)\setminus\{x_*\}\to \mathbb{R}_{\geq 0}$ via $T^+_*(x):=\inf\{t\geq 0:\varphi^t(x)=x_*\}$ and hope to use $(-\infty,T^+_*(x))$ instead of $(-\infty,+\infty)$. A problem is, even under Assumption~\ref{ass:reg:X:M}, $T^+_*$ cannot be guaranteed to be continuous. For instance, consider the vector field $X_0$ on the plane, defined on $\mathbb{R}^2\setminus \{0\}$ through
\begin{equation}
\label{equ:X0}
\begin{pmatrix}
\dot{x}_1\\
\dot{x}_2
\end{pmatrix} = X_0(x):=
\begin{pmatrix}
-x_1\\
-\displaystyle\frac{x_2}{|x_2|+x_1^2}
\end{pmatrix}
\end{equation}
with $X_0(0):=0$, see Figure~\ref{fig:X0}. In this case, $T^+_*$ is discontinuous at $\{(0,x_2):x_2\neq 0\}$ (jumping from $|x_2|$ to $+\infty$, one may select $(x_1,x_2)\mapsto V(x_1,x_2)=x_1^2+x_2^2$ as a Lyapunov function to assert stability). Hence, in the following proof we will look at the smallest $t\geq 0$ such that $\varphi^t(x)$ enters a \textit{neighbourhood} of $x_*$ instead.  

In the following, \textit{closed} refers to closed in the subspace topology, not ``compact and without boundary'' as is customary in differential topology. We have visualized the theorem in Figure~\ref{fig:HGthm}. 

\begin{theorem}[A global Hartman-Grobman theorem, without hyperbolicity, for asymptotically stable semiflows]
\label{thm:discont}
For $M\subseteq \mathbb{R}^k$ a closed, smooth, $n$-dimensional embedded submanifold, let $x_*\in M$ be asymptotically stable, under a semiflow $\varphi$ (\textit{i.e.}, a Filippov solution) generated by a vector field that satisfies Assumption~\ref{ass:reg:X:M} for $x':=x_*$  and let $B(x_*)\subseteq M$ be the corresponding basin of attraction. 
\begin{enumerate}[(I)]
\item Suppose that $(-\infty,0]\times B(x_*)\setminus\{x_*\}\subseteq \mathrm{dom}(\varphi)$, then, for any $r>0$ there is a homeomorphism $h_r:B(x_*)\to \mathbb{R}^n$ and a $\gamma_r\in \mathcal{K}_{\infty}$ such that for all $y\in B(0,r)^c$ 
\begin{equation}
\label{equ:homeo:thm}
    \left.h_r\circ \varphi^t|_{B(x_*)} \circ h_r^{-1}\right|_{B(0,r)^c}(y) = \left. e^{-t \cdot I_n}\right|_{B(0,r)^c}(y)
\end{equation}
holds for all $t$ such that $0\leq t\leq \gamma_r(\|y\|_2-r)$.
\item Suppose that $(-\infty,0]\times B(x_*)\setminus\{x_*\}\not\subseteq \mathrm{dom}(\varphi)$, then, for any $r>0$ there is a homeomorphism $h_r:B(x_*)\to \mathbb{R}^n$, a $\gamma_r\in \mathcal{K}_{\infty}$ and a $R>r$ such that for all $y\in \mathrm{cl}(B(0,R))\setminus B(0,r)$, \eqref{equ:homeo:thm} holds for all $t$ such that $0\leq t\leq \gamma_r(\|y\|_2-r)$.  
\end{enumerate}
\end{theorem}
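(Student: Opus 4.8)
The plan is to build $h_r$ out of a smooth converse Lyapunov function together with the topological classification of its level and sublevel sets, choosing the radial coordinate of $h_r$ so that the conjugacy with $e^{-t\cdot I_n}$ becomes an \emph{identity} rather than an estimate; the time bound $\gamma_r$ --- and in case (II) the radius $R$ --- will then drop out as by-products.

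First I would invoke the smooth converse Lyapunov theory available under Assumption~\ref{ass:reg:X:M} (\cite{ref:clarke1998asymptotic}, and \cite{ref:mayhew2011topological} via \cite{ref:CaiTeelGoebelPII2008}) to fix a smooth proper $V:B\to\mathbb{R}_{\geq 0}$ with $V^{-1}(0)=\{x_*\}$ and $\langle\nabla V(x),f\rangle\leq-\alpha(V(x))$ for all $f\in\mathcal{F}[X](x)$, $x\neq x_*$, some $\alpha\in\mathcal{K}_{\infty}$; in particular every positive value of $V$ is regular, $t\mapsto V(\varphi^t(p))$ is absolutely continuous and strictly decreasing as long as the orbit avoids $x_*$, with limit $0$, and (by uniqueness and continuous dependence of the solutions) the hitting time $\tau(p):=\min\{t\geq 0 : V(\varphi^t(p))=1\}$ is continuous on $\{V\geq 1\}$, with $\tau(\varphi^t(p))=\tau(p)-t$ and $\varphi^{\tau(p)}(p)=:\sigma(p)\in\Sigma:=\{V=1\}$ along orbit segments staying in $\{V\geq 1\}$. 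Fixing a homeomorphism $\Psi:\Sigma\to\mathbb{S}^{n-1}$ (a regular level set of a Lyapunov function is a sphere by Wilson \cite{ref:wilson1967structure}), I would put $h_r(p):=r\,e^{\tau(p)}\,\Psi(\sigma(p))$ on $\{V\geq 1\}$ in case (I), and on the compact band $K:=\{1\leq V\leq v\}$ for an arbitrary regular value $v>1$ in case (II). Since $\varphi^t(p)$ stays in $\{V\geq 1\}$ for $0\leq t\leq\tau(p)$ while $\tau$ drops by $t$ and $\sigma$ is unchanged, one obtains $h_r(\varphi^t(p))=e^{-t}h_r(p)$ for $0\leq t\leq\tau(p)$; and as $\tau(p)=\ln(\|h_r(p)\|_2/r)=\ln\!\big(1+(\|h_r(p)\|_2-r)/r\big)$, this is exactly \eqref{equ:homeo:thm} with $\gamma_r(s):=\ln(1+s/r)\in\mathcal{K}_{\infty}$.

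Next I would check that $h_r$ is a homeomorphism onto its image and extend it over $B$. The map $p\mapsto(\tau(p),\sigma(p))$ is a homeomorphism onto $[0,\infty)\times\Sigma$ in case (I) --- surjectivity is where the backward-completeness hypothesis enters, backward orbits from $\Sigma$ remaining in $\{V\geq 1\}$ --- and onto $\{(\tau,\sigma):0\leq\tau\leq\tau_{\max}(\sigma)\}$ in case (II), since backward orbits from $\Sigma$ cannot escape the compact band $K$ and hence exist until $V=v$. Composing with $\mathrm{id}\times\Psi$ and the polar map $(\tau,u)\mapsto re^{\tau}u$ then shows that $h_r$ carries $\{V\geq 1\}$ homeomorphically onto $B(0,r)^c$ in case (I), and $K$ homeomorphically onto a star-shaped annulus $A=\{t\,u : u\in\mathbb{S}^{n-1},\ r\leq t\leq\rho(u)\}$ with $\rho$ continuous and $\min_{\mathbb{S}^{n-1}}\rho=re^{\,\min_{\{V=v\}}\tau}=:R>r$ in case (II); in particular $\mathrm{cl}(B(0,R))\setminus B(0,r)\subseteq A$, which is where $R$ enters. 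It then remains to extend $h_r$ over $\{V\leq 1\}$ --- and, in case (II), over $\{V\geq v\}$ --- compatibly on the interfaces. Over $\{V\leq 1\}\simeq_t\mathbb{D}^n$ (Wilson \cite{ref:wilson1967structure}; also used this way in \cite{ref:kvalheim2025global}) I would take a homeomorphism onto $\mathbb{D}^n$, correct its boundary restriction by the conical extension of a suitable self-homeomorphism of $\mathbb{S}^{n-1}$, and scale by $r$, obtaining a homeomorphism onto $\mathrm{cl}(B(0,r))$ equal to $r\Psi$, hence to $h_r$, on $\Sigma$. In case (II), $\{V\geq v\}$ and the closed unbounded complementary domain $\mathrm{cl}(U_{\mathrm{out}})$ of $A$ are both $\simeq_t\mathbb{S}^{n-1}\times[0,\infty)$ --- the former by Wilson plus the generalized Schoenflies theorem ($\{V=v\}$ being a locally flat sphere), the latter because $A$ is star-shaped --- so I would extend $h_r$ over $\{V\geq v\}$ onto $\mathrm{cl}(U_{\mathrm{out}})$ by a map that is, in these coordinates, the product of a self-homeomorphism of $\mathbb{S}^{n-1}$ with $\mathrm{id}_{[0,\infty)}$, with the sphere factor chosen so that the extension agrees with $h_r$ on $\{V=v\}$. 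The assembled map $h_r:B\to\mathbb{R}^n$ is a continuous bijection, and a homeomorphism by the pasting lemma applied to the (two or three) closed pieces on each of which it is a homeomorphism onto its image.

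The main obstacle is the geometric-topology bookkeeping of this last step in the present merely forward-complete, set-valued setting: legitimately invoking the Wilson/Schoenflies classification of the level and sublevel sets of $V$, and then performing the extensions over $\{V\leq 1\}$ and $\{V\geq v\}$ so that they match the flow-defined chart along the interfaces $\Sigma$ and $\{V=v\}$ and glue into a genuine global homeomorphism --- that is, the careful handling of collars and of the extension of boundary homeomorphisms. By contrast, the exactness of the conjugacy on the outer chart, and with it the clean forms of $\gamma_r$ and $R$, are essentially forced by the normalization $\|h_r(p)\|_2=re^{\tau(p)}$.
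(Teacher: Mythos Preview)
Your proposal is correct and follows the paper's core strategy: fix a smooth converse Lyapunov function $V$, use the Poincar\'e conjecture to identify a level set $\Sigma=V^{-1}(1)$ with $\mathbb{S}^{n-1}$, and define $h_r$ on the outer shell via the hitting time $\tau$ to $\Sigma$ through $h_r(p)=re^{\tau(p)}\Psi(\sigma(p))$; the conjugacy \eqref{equ:homeo:thm} then holds by construction, and your explicit $\gamma_r(s)=\ln(1+s/r)$ is precisely what the paper's infimum definition of $\gamma_r$ collapses to (and coincides with the formula computed in the paper's Example).

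The genuine difference is in how the extensions off the ``linearized'' shell are handled. For the inner piece $\{V<1\}$ you argue abstractly ($\{V\le 1\}\simeq_t\mathbb{D}^n$, then correct the boundary by a conical extension), whereas the paper writes down an explicit extension with $\tau'(x)=\ln(V(x)/\varepsilon)$ radially and the backward-flow projection $\rho'(x)=\varphi^{-T^-_\varepsilon(x)}(x)$ angularly. Similarly in case~(II) for the outer piece $\{V>v\}$ you invoke a Schoenflies/product-structure argument, while the paper simply sets $\tau_C(x)=\tau(x)+(V(x)-C)$ and keeps the same angular map $\rho'$. Your abstract route makes transparent that these extensions are soft topology with no dynamical content; the paper's explicit route avoids having to justify the collar/Schoenflies classification of $\{V\ge v\}$ inside a basin $B$ that is not yet known to be $\mathbb{R}^n$ (your invocation of Schoenflies there is a bit circular as stated---the cleaner justification is Morse-theoretic via the gradient flow of the proper, critical-point-free $V$), and yields formulas that feed directly into the worked example.
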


\begin{figure}
    \centering
    \includegraphics[scale=0.9]{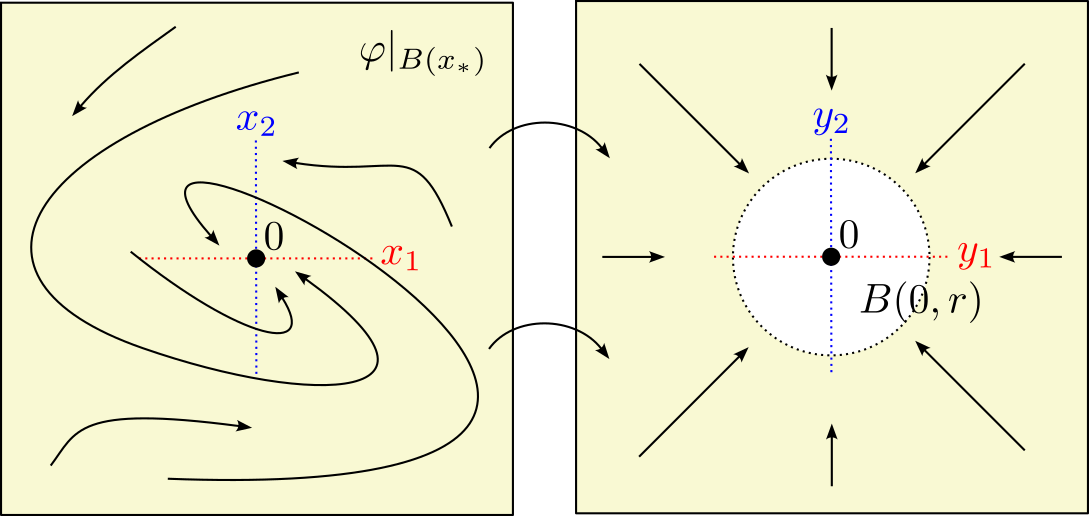}
    \caption{Visualization of Theorem~\ref{thm:discont}: any asymptotically stable semiflow (\textit{i.e.}, a semiflow such that some fixed point $x_*$ is asymptotically stable with basin of attraction $B(x_*)$) can be transformed through a topological conjugacy, ``practically'', to the canonical asymptotically stable flow corresponding to $\dot{x}=-x$, that is, the conjugacy holds outside any selected neighbourhood of $x_*$ (\textit{e.g.}, outside $B(0,r)$).}
    \label{fig:HGthm}
\end{figure}

\begin{proof}
    The proof is an extension of that of \cite[Thm. 2]{ref:kvalheim2025global}. We follow their arguments and notation to a large extent.
   
    We start with \textit{(I)}. By asymptotic stability of $x_*$ and Assumption~\ref{ass:reg:X:M}, there is a $C^{\infty}$ Lyapunov function $V:B(x_*)\to \mathbb{R}_{\geq 0}$ corresponding to the pair $(x_*,\varphi)$ \cite{ref:clarke1998asymptotic,ref:CaiTeelGoebelPII2008,ref:mayhew2011topological}.
    Now consider $V^{-1}(\varepsilon)$ for some $\varepsilon>0$. As $V^{-1}(\varepsilon)\simeq_h \mathbb{S}^{n-1}$ \cite{ref:wilson1967structure}, then, by the resolution of the \textit{topological} Poincar\'e conjecture, there is always a homeomorphism $P:V^{-1}(\varepsilon)\to \mathbb{S}^{n-1}$. The role of mapping to $\mathbb{S}^{n-1}$ is really to ``straighten'' the dynamics, the symmetry of $\mathbb{S}^{n-1}$ is less important.

    Set $L_{\varepsilon}:=V^{-1}(\varepsilon)$ and $U_{\varepsilon}:=V^{-1}([0,\varepsilon))$ to define the map $T^+_{\varepsilon}:B(x_*)\setminus U_{\varepsilon}\to \mathbb{R}_{\geq 0}$ through $T^+_{\varepsilon}(x):=\inf\{t\geq 0:\varphi^t(x)\in L_{\varepsilon} \}$. Following similar arguments as in \cite[Thm. 5]{ref:moulay2010topological}, it follows that $T^+_{\varepsilon}$ is continuous on $B(x_*)\setminus U_{\varepsilon}$. As discussed above, we cannot simply set $\varepsilon:=0$. Now, define 
    \begin{equation*}
        W:=\bigcup_{x\in L_{\varepsilon}}\left(-\infty,0\right]\times \{x\} \subseteq \mathbb{R}\times L_{\varepsilon},    
        \end{equation*}
then, thanks to continuity of $T^+_{\varepsilon}$ and the standing assumptions (\textit{i.e.}, away from $x_*$ we can move backwards indefinitely, but forwards we might converge in finite-time) it follows that $(t,x)\mapsto \varphi^t(x)$ defines a homeomorphism from $W$ to $B\setminus U_{\varepsilon}$, with inverse $g=(\tau,\rho):B\setminus U_{\varepsilon} \to W$, \textit{i.e.}, $\rho(x)\in L_{\varepsilon}$ and $\varphi^{\tau(x)}(\rho(x))=x$.

However, it will be more convenient to define the inverse $g$ through $\varphi^{-\tau(x)}(\rho(x))=x$ so that $\tau(x)\to T^+_{\varepsilon}(\rho(x))=0^+$ for $B\setminus U_{\varepsilon}\ni x\to L_{\varepsilon}$. 

The homeomorphism $h$ from \cite{ref:kvalheim2025global}, as defined through $x\mapsto h(x):=e^{\tau(x)}P(\rho(x))$, was designed for flows and relies on $\tau(x)\to -\infty$ for $x\to x_*$. In particular, their domain of $
\tau$ is $B(x_*)\setminus \{x_*\}$ and not $B(x_*)\setminus U_{\varepsilon}$. Thus, we need to rescale. In particular, consider the homeomorphism $x\mapsto h'(x):=e^{\tau'(x)}P(\rho'(x))$ from $B(x_*)$ to $\mathbb{R}^n$, with ${\tau'}$ defined on $B(x_*)\setminus \{x_*\}$ through 
\begin{equation}
\label{equ:tau:prime}
    \tau'(x) := \begin{cases}
\tau(x) \quad &x\in B(x_*)\setminus U_{\varepsilon}\\
\displaystyle \ln\left(\frac{V(x)}{\varepsilon}\right)\quad &\text{otherwise}.
    \end{cases}
\end{equation}
Now, define the map $T^-_{\varepsilon}:(L_{\varepsilon}\cup U_{\varepsilon})\setminus\{x_*\}\to \mathbb{R}_{\geq 0}$ through $T^-_{\varepsilon}(x)=\inf\{t\geq 0:\varphi^{-t}(x)\in L_{\varepsilon}\}$, and eventually $\rho'$ on $B(x_*)\setminus\{x_*\}$ via
\begin{equation*}
\label{equ:rho:prime}
    \rho'(x) := \begin{cases}
\rho(x) \quad &x\in B(x_*)\setminus U_{\varepsilon}\\
\varphi^{-T^-_{\varepsilon}(x)}(x)\quad &\text{otherwise}.
    \end{cases}
\end{equation*}
At last, set $h'(x_*):=0$. Regarding continuity, observe that for $x\in L_{\varepsilon}$, we have $\tau(x)=0$ and for $U_{\varepsilon}\ni x\to L_{\varepsilon}$ we have $\tau(x)\to 0^-$. Moreover, for $x\to x_*$ we have that $\tau'(x)\to -\infty$ and thus $e^{\tau'(x)}\to 0$ such that thanks to compactness of $L_{\varepsilon}$ we have $h(x)\to 0$. Continuity of $T_{\varepsilon}^-$ follows again from \cite[Thm. 5]{ref:moulay2010topological} and continuity of $h'^{-1}$ follows, for instance, from an open mapping argument (\textit{e.g.}, see \cite[Thm. A.38]{ref:Lee2}).  

Following \cite{ref:kvalheim2025global}, see that for any $x\in B(x_*)\setminus U_{\varepsilon}$ we have $\rho'(\varphi^t(x))=\rho'(x)$ and for sufficiently small $t\geq 0$ we have $\tau(\varphi^t(x))=\tau(x)-t$ (\textit{e.g.}, see that $\varphi^t(x)=\varphi^t(\varphi^{-\tau(x)}(\rho(x)))=\varphi^{-\tau(\varphi^t(x))}(\rho(x))$.

Specifically, see that $\tau'(\varphi^t(x))=\tau(x)-t$ for $x\in B(x_*) \setminus U_{\varepsilon}$ and $t\leq T^+_{\varepsilon}(x)$ with $T^+_{\varepsilon}(x)>0$ for $x\in B(x_*)\setminus (L_{\varepsilon}\cup U_{\varepsilon})$.

Now, let $y:=h'(x)$, then $x\in L_{\varepsilon}\implies y=h'(x)\in \mathbb{S}^{n-1}$. Hence, \eqref{equ:tau:prime} implies that outside of $B(0,1)$, the semiflow of the conjugate system, that is, $(t,y)\mapsto h'\circ \varphi^t|_{B(x_*)}\circ h'^{-1}(y)$, is equivalent to $(t,y)\mapsto e^{-t}y$, for $t\leq T^+_{\varepsilon}(x)=T^+_{\varepsilon}(h'^{-1}(y))$ and $y\in B(0,1)^c$.
To recover $h_r$ of the theorem, simply scale $h'$ by $r$ to scale the radius of the ball (recall that $P:L_{\varepsilon}\to \mathbb{S}^{n-1}$). 

At last, define $\gamma_r:\mathbb{R}_{\geq 0}\to \mathbb{R}_{\geq 0}$ through 
\begin{equation*}
s\mapsto\gamma_r(s) := \inf_{y\in \mathbb{R}^n} \left\{T^+_{\varepsilon} \circ \left.h_r^{-1}\right|_{B(0,r)^c}(y):\|y\|_2 = s+r\right\}.
\end{equation*}
It follows from, for instance, Berge's maximum theorem \cite[p. 115]{ref:berge1963topological} that $\gamma_r$ is continuous and from the fact that on $B(x_*)\setminus \{x_*\}$, $\varphi^t(x)$ extends indefinitely backwards, that $\gamma_r\to +\infty$ for $\|y\|_2\to +\infty$. Hence, as by properties of $\varphi, V$ and $T^+_{\varepsilon}$, $\gamma_r$ is strictly monotone, we have that $\gamma_r\in \mathcal{K}_{\infty}$.

To continue with \textit{(II)}, the arguments are almost identical, yet, now we need to carefully deform $B(x_*)\setminus \{x_*\}$ around all its ``\textit{boundaries}'', not only around $x_*$. Although we keep the same notation as for \textit{(I)}, one should understand the Lyapunov function $V$ in the context of \textit{(II)}.

To start, pick some $C>\varepsilon$ and set $L_C:=V^{-1}(C)$ and $U_C:=V^{-1}([0,C))$ to define the map $T^-_{C}: (L_C\cup U_C)\setminus U_{\varepsilon} \to \mathbb{R}_{\geq 0}$ through $T^-_{C}(x) = \inf_t\{t\geq 0:\varphi^{-t}(x)\in L_C\}$. Now, if we define  
    \begin{equation*}
        W_{C}:=\bigcup_{x\in L_{\varepsilon}}[-T^-_{C}(x),0]\times \{x\} \subseteq \mathbb{R}\times L_{\varepsilon},    
        \end{equation*}
then, $(t,x)\mapsto \varphi^t(x)$ yields a homeomorphism from $W_{C}$ to $(L_C\cup U_C)\setminus U_{\varepsilon}\subset B(x_*)\setminus \{x_*\}$.  

To proceed, we will effectively ``\textit{linearize}'' on this compact set $K_{\varepsilon,C}:= (L_C\cup U_C)\setminus U_{\varepsilon}=V^{-1}([\varepsilon,C])$, accommodated by appropriate deformations on the remaining space. 

With this construction in mind, consider now the homeomorphism $x\mapsto h_C(x):=e^{\tau_C(x)}P(\rho_C(x))$ from $B(x_*)$ to $\mathbb{R}^n$, with again $h_C(x_*):=0$, $\rho_C:=\rho'$ and with ${\tau_C}$ defined on $B(x_*)\setminus \{x_*\}$ through 
\begin{equation*}
    \tau_C(x) := \begin{cases}
    \tau(x)+\left( V(x)-V(C)\right)\quad & x\in B(x_*)\setminus K_{\varepsilon,C}\\
\tau(x) \quad &x\in K_{\varepsilon,C}\\
\displaystyle\ln \left(\frac{V(x)}{\varepsilon} \right) \quad &\text{otherwise}.
    \end{cases}
\end{equation*}
Note that by coercivity of $V$, we have $\tau_C(x)\to +\infty$ for $x\to \partial B(x_*)$, with $x\in B(x_*)\setminus \{x_*\}$. 

Again, we can multiply $h_C$ by $r$ to rescale. In this case, $R$ is given by $\inf_{x\in L_C}r\cdot e^{\tau_C(x)}$, which is attained due to compactness of $L_C$ and continuity of $\tau_C$. As $\tau_C(x)=0$ for $x\in L_{\varepsilon}$ and $C>\varepsilon$, we have that $R>r$. Note, this exponential term defining $R$ is exactly what one should expect, given that we do not reparametrize time (\textit{e.g.}, we should have $r=e^{-T}R$ for some appropriate $T>0$). 

At last, we can also employ $T^+_{\varepsilon}$ and $\gamma_r$ again, subject to constraining their domain appropriately.   
\end{proof}

Of course, if $\varphi$ happens to be a flow (\textit{i.e.}, $t\in \mathbb{R}$), Theorem~\ref{thm:discont} is also true, but slightly conservative as there is no need to construct $B(0,r)$ and the like~\textit{cf}. \cite[Thm. 2]{ref:kvalheim2025global}. 

We also emphasize that we merely show that there is a \textit{homeomorphism} $h_r$, one should not expect to be able to differentiate through $h_r$, \textit{e.g.}, see Section~\ref{sec:ex} and Figure~\ref{fig:HGflownormtf}.  

\begin{remark}[Asymptotically stable semiflows are practically indistinguishable]
Theorem~\ref{thm:discont} shows that, up to a change of coordinates, every asymptotically stable semiflow is ``practically'' topologically conjugate to the canonical semiflow $(y,t)\mapsto e^{-t}y$. Here, ``practically'' means that for any (\textit{e.g.}, arbitrarily small) neighbourhood $U\subseteq B(x_*)$ of $x_*$, we can find a homeomorphism that gives rise to the conjugacy on $B(x_*)\setminus U$ for sufficiently small $t$. This means that, up to a change of coordinates, all these asymptotically stable semiflows are practically indistinguishable. This seems especially interesting when the true system is finite-time stable and we happen to work with coordinates that give the impression (\textit{i.e.}, on $B(0,r)^c$) that the stability is merely asymptotic, see Section~\ref{sec:ex:A} below. Clearly, the converse is also interesting, as illustrated in Section~\ref{sec:ex:B}. We believe these observations have ramifications for data-driven learning of stable dynamical systems.
\end{remark}

\begin{figure}
    \centering
    \includegraphics[scale=0.9]{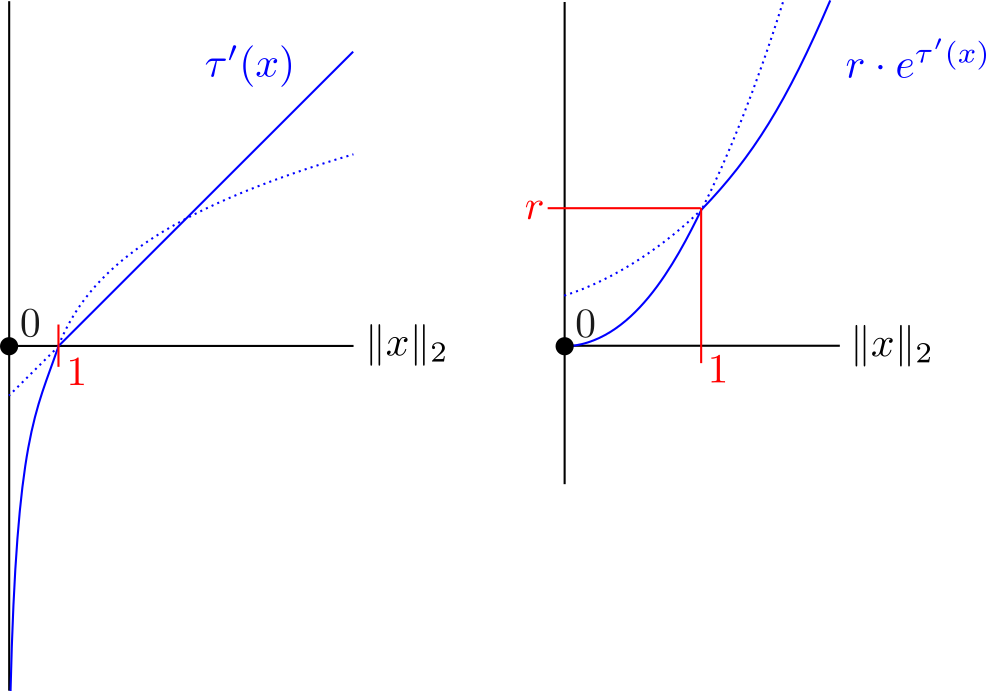}
    \caption{The maps $x\mapsto \tau'(x)$ and $x\mapsto r\cdot e^{\tau'(x)}$ from Section~\ref{sec:ex:A}. Recall that we have set $\varepsilon$ such that $V^{-1}(\varepsilon)=L_{\varepsilon}=\mathbb{S}^{n-1}$ such that $\tau'(x)=\ln(\|x\|_2^2)$ for $\|x\|_2<1$ and $\tau'(x)=\|x\|_2-1$ for $\|x\|_2\geq 1$.}
    \label{fig:HGtau}
\end{figure}

\section{Examples}
\label{sec:ex}
To exemplify Theorem~\ref{thm:discont} and utilize the semi-constructive nature of its proof, we discuss case \textit{(I)} (as case \textit{(II)} follows as a corollary). For the notation, we refer to Theorem~\ref{thm:discont} and its proof. 

\subsection{}
\label{sec:ex:A}
Specifically, regarding~\eqref{equ:normalized:flow} on $\mathbb{R}^n$, note that $x\mapsto V(x):=\frac{1}{2}\|x\|_2^2$ checks out as a Lyapunov function\footnote{Indeed, this Lyapunov function even certifies finite-time stability as $\dot{V}\leq -\frac{1}{2}V^{\alpha}$ for $\alpha=\tfrac{1}{2}$, \textit{e.g.}, see \cite{ref:MP2005finite}.} so that $B(x_*)=\mathbb{R}^n$ and $x_*=0$. Indeed $V^{-1}(\varepsilon)=\{x\in \mathbb{R}^n:\|x\|_2=\sqrt{2\varepsilon}\}$, so we simply select $\varepsilon:=\frac{1}{2}$ such that $P:V^{-1}(\varepsilon)\to \mathbb{S}^{n-1}$ becomes the identity map on $\mathbb{S}^{n-1}$. 

As, $T^-_{\varepsilon}(x)=1-\|x\|_2$, we get that $x\mapsto \rho'(x)=x/\|x\|_2$ for all $x\in B(x_*)\setminus\{x_*\}$. Then since $\varphi_1^{-\tau(x)}(\rho(x))=x$ we get that $\tau'(x)=\|x\|_2-1$ on $B(x_*)\setminus U_{\varepsilon}$ and $\tau'(x)=\ln(\|x\|_2^2/ 2\varepsilon)$ on $U_{\varepsilon}\setminus \{x_*\}$. See Figure~\ref{fig:HGtau} for a visualization of the map $\tau'$.

Now, the homeomorphism $h_r:\mathbb{R}^n\to \mathbb{R}^n$ from Theorem~\ref{thm:discont} is explicitly given as
\begin{equation*}
    h_r(x) := \begin{cases}
        \displaystyle r\cdot e^{\tau'(x)}\frac{x}{\|x\|_2}\quad &x\in \mathbb{R}^n\setminus \{0\}\\
        0 \quad & \text{otherwise}.
    \end{cases} 
\end{equation*}
Indeed, for $x\to x_*$ we have that $e^{\tau'(x)}\to 0$. We can also explicitly state the inverse $h_r^{-1}:\mathbb{R}^n\to \mathbb{R}^n$ as

\begin{equation*}
    h^{-1}_r(y) := \begin{cases}
        \displaystyle \alpha_r(y)\frac{y}{\|y\|_2}\quad &y\in \mathbb{R}^n\setminus \{0\}\\
        0 \quad & \text{otherwise}
    \end{cases} 
\end{equation*}
with $\alpha_r$ defined on $\mathbb{R}^n\setminus \{0\}$ through
\begin{equation*}
    \alpha_r(y) :=\begin{cases}
        \displaystyle\ln\left(\frac{\|y\|_2}{r}\right)+1 \quad & y\in B(0,r)^c\\
        \displaystyle \left(\frac{\|y\|_2}{r}\right)^{1/2}\quad &\text{otherwise}.
    \end{cases}
\end{equation*}

Now we have the ingredients to explicitly compute $h_r\circ \varphi_1^t\circ h_r^{-1}$. We do this for (i) $y\in B(0,r)^c$ and (ii) for $y\in B(0,r)$.

(i)~First, suppose that we have $y\in B(0,r)^c$ such that $\|y\|_2=\delta r$ for some $\delta\geq 1$. It follows that $\alpha_r(y) = 1+\ln(\delta)$ and thus $x:=h_r^{-1}(y)=(1+\ln(\delta))y/\|y\|_2$. Recall that $\|\varphi^t(x')\|_2 = \|x'\|_2-t$, for any $x'\in \mathbb{R}^n\setminus \{0\}$ and $t\leq \|x'\|_2$ and thus, we have that $\varphi^t(x)\in B(0,1)^c$ for $t\leq \ln(\delta)$. Then, looking at the definition of $\tau'$, we can put it all together, that is, we get
\begin{equation*}
    h_r\circ \varphi^t\circ h_r^{-1}(y) = r\delta e^{-t} \frac{y}{\|y\|_2} = e^{-t}y
\end{equation*}
for $t\leq \ln(\delta)=\ln(\|y\|_2/r)$. Therefore, under the homeomorphism $h_r$, once we start in $B(0,r)^c$, we flow towards $B(0,r)$ along the canonical dynamics $\dot{x}=-x$. 

We add that a simple computation shows that $\gamma_r\in \mathcal{K}_{\infty}$, as constructed in the proof of Theorem~\ref{thm:discont}, becomes
\begin{equation*}
s\mapsto \gamma_r(s):=\ln \left(\frac{s+r}{r} \right).
\end{equation*}
Indeed, then $\gamma_r(\|y\|_2-r)=\ln(\delta)$. 

(ii)~Now suppose we start within $B(0,r)$, say $\|y\|_2=\theta r$ with $\theta\in (0,1)$ ($\theta=0$ is not very interesting). To comply with topological conjugacies, we must recover finite-time stability around the stable equilibrium. See that $a_r(y;\theta)=\sqrt{\theta}$ and thus $x:=h_r^{-1}(y)=\sqrt{\theta} y/\|y\|_2$ with $\|x\|_2<1$. This means that we look at the second case of $\tau'$ and 
\begin{equation}
\label{equ:ii:conjugacy}
\begin{aligned}
    h_r\circ \varphi^t\circ h_r^{-1}(y) &= r \left(\sqrt{\theta}-t\right)^2 \frac{y}{\|y\|_2}\\
         &= r \left(\sqrt{\|y\|_2/r}-t\right)^2 \frac{y}{\|y\|_2}
\end{aligned}
\end{equation}
for $t\leq \sqrt{\theta}=\sqrt{\|y\|_2/r}$. Thus, finite-time stability is preserved, as it should. Comparing \eqref{equ:ii:conjugacy} to \eqref{equ:flow:sol}, see that apparent scaling is due to our \textit{choice} of Lyapunov function. Note that the dynamics on $B(0,r)$ are now reminiscent of $\dot{s}=-\sqrt{s}$, for $s\geq 0$, as the solution to this positive, scalar ODE becomes $(s,t)\mapsto\varphi_{\sqrt{}}^t(s):=( (s-t)/2)^2$ for $t\leq s$.

With the above in mind, let $\widetilde{\varphi}_1$ denote the transformed semiflow, \textit{i.e.}, $\widetilde{\varphi}_1^t := h_r\circ \varphi_1^t\circ h_r^{-1}$, then we visualize the transformed dynamics in Figure~\ref{fig:HGflownormtf}. Observe the exponential decay until $B(0,r)$. 

\begin{figure}
    \centering
    \includegraphics[scale=0.9]{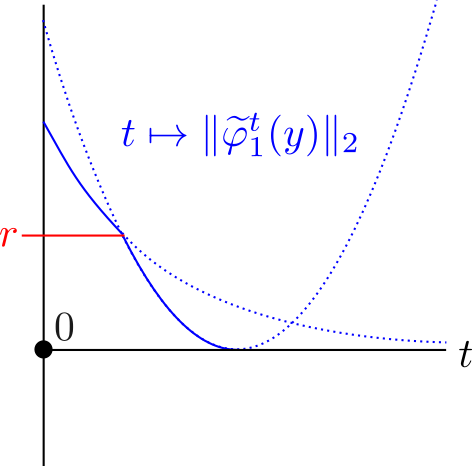}
    \caption{Resulting semiflow from Section~\ref{sec:ex:A}, \textit{e.g.}, we visualize $t\mapsto \|\widetilde{\varphi}_1^t(y)\|_2$ for $r=1$ and starting from $\|y\|_2=2$, that is, although the true system is finite-time stable, we first have exponential (asymptotic) decay for $t\leq \ln(2)$, then we follow \eqref{equ:ii:conjugacy} (from $B(0,r)$) and observe a decay of the form $t\mapsto (1-(t-\ln(2))^2$ until we hit $0$.}
    \label{fig:HGflownormtf}
\end{figure}

\subsection{}
\label{sec:ex:B}
Suppose we work with $\dot{x}=-x$ and see that~\eqref{equ:normalized:flow} can be understood as the ``canonical'' finite-time stable system. The previous section detailed how to go from~\eqref{equ:normalized:flow} to the flow under $\dot{x}=-x$, at least, on $B(0,r)^c$. Evidently, this can be reversed. 
We will provide a simple example. 

In Section~\ref{sec:ex:A} we constructed the map $P:L_{\varepsilon}=V^{-1}(\varepsilon)\to \mathbb{S}^{n-1}$ to be the identity, which in turn led to $h_r^{-1}(r\cdot\mathbb{S}^{n-1})=\mathbb{S}^{n-1}$, for $h_r$ as in Section~\ref{sec:ex:A}. Indeed, one may further rescale these spheres, however, we continue without doing so and readily find the following: 
\begin{equation}
\label{equ:homeo:ex:B}
    \left.h_r^{-1}\circ e^{t\cdot I_n} \circ h_r\right|_{B(0,1)^c}(x) = \left. \varphi_1^{t}\right|_{B(0,1)^c}(x)
\end{equation}
for $t\leq \ln(\|h_r(x)\|_2/r)=\tau'(x)$. We visualize $h_r^{-1}\circ e^{-t\cdot I_n}\circ h_r=:\widetilde{\varphi}^t_e$ in Figure~\ref{fig:HGrev}. 

\section{Discussion}

\begin{figure}
    \centering
    \includegraphics[scale=0.9]{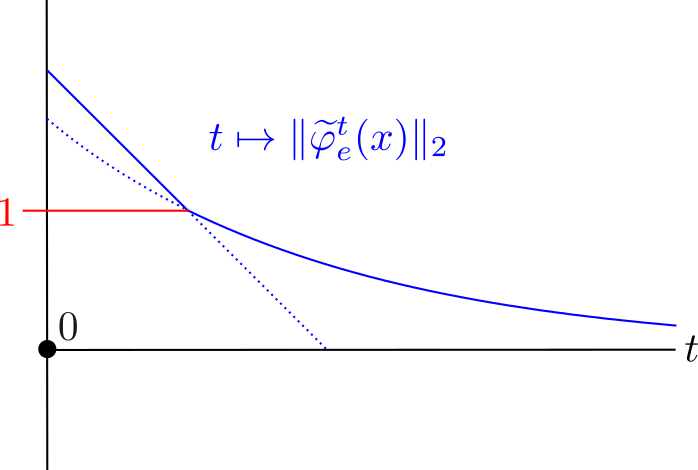}
    \caption{Resulting semiflow from Section~\ref{sec:ex:B}, \textit{e.g.}, we visualize $t\mapsto \|\widetilde{\varphi}_e^t(x)\|_2$ starting from $\|x\|_2=2$, that is, although the true system is merely asymptotically stable, we follow $t\mapsto\|x\|_2-t$ (\textit{i.e.}, typical finite-time behaviour) until we hit the unit ball. Afterwards we have an asymptotic decay of the form $t\mapsto e^{-1/2 (t-1)}\|x\|_2$.}
    \label{fig:HGrev}
\end{figure}

There are several ways to prove Theorem~\ref{thm:discont} and similar results. We did not consider a reparametrization of time, nor the simplest form the dynamics could have outside of the domain of linearization. Plus, for simplicity, we have focused on standard Euclidean balls (\textit{e.g.}, $B(0,r)$).

In spirit, Theorem~\ref{thm:discont} can also be understood as a non-local (and non-constant) version of flow-box (or straightening-out) results \textit{cf}. \cite[Thm. 2.26]{ref:nonlin}. Better yet, one may interpret the above as a notion of a \textit{practical linearization}, akin to practical stability \cite[\S 25]{ref:lefschetzlasalle1961} that is.  

Results like these not only reinforce the Koopman operator framework, but also the ``\textit{hybridization}'' of topological index theory, \textit{e.g.}, see \cite{ref:gottlieb1995index,ref:Kvalheim2021PHhybrid}. Moreover, these results are closely related to recent studies \cite{ref:jongeneel2024hGAS,ref:kvalheim2025differential} into the topology of stable systems as decompositions like \eqref{equ:homeo:thm} allow for studying spaces of stable systems through homeomorphism groups, \textit{e.g.}, see \cite[Prop. 3.2]{ref:jongeneel2024hGAS}. To reiterate, although we allude to links with Koopman operator theory, we emphasize that the Koopman framework is more generally concerned with spectra, not continuous transformations per se \cite{ref:mezic2021koopman}. To demand that a linearization itself is a \textit{continuous map} is something we are interested in due to aforementioned links with topological dynamics systems theory. Regarding future work, we are especially interested in generalizations beyond equilibrium points, that is, to general attractors. Even when restricted to compact attractors, this extension is highly non-trivial. The ``canonical system'' (\textit{e.g.}, $\dot{x}=-x$ on $\mathbb{R}^n$) is not obvious, although for some compact attractors $A$ on a metric space $(X,d)$, one might consider an ODE of the form $\dot{x} = -\partial_x d(x,A)^2$, for $d(x,A)=\inf_{a\in A}d(x,a)$. However, despite a well-developed converse Lyapunov theory (\textit{e.g.}, see~\cite{ref:bhatia1970stability}), it is yet unclear how to generalize the approach from above. Note, we do have a well-developed understanding of homotopy equivalence \cite{ref:wilson1967structure,ref:lin2022wilson} and topological embeddings \cite{ref:kvalheim2023linearizability}. Additionally, we know that not every compact attractor itself is homotopy equivalent to its domain of attraction~\cite{ref:jongeneelECC24}, thus obstructing a direct extension, in general. For merely closed attractors, the situation is even less transparent \cite{ref:lin2022wilson,ref:jongeneel2025topological}.  

At last, we are also interested in deeper ramifications for learning dynamical systems, as alluded to before. 
\printbibliography
\end{document}